\def\gp#1{\langle#1\rangle}
\def\gp#1{\langle #1 \rangle}
\def\supp#1{\rm supp #1}
\newtheorem{theorem}{Theorem}
\newtheorem{lemma}{Lemma}
\newtheorem{fact}{Fact}
\newtheorem{corollary}{Corollary}
\keywords{finitary linear group, integral domain, commutative Noetherian  ring}
\subjclass{20H25}
\title{Subgroups   of a finitary linear group}
\author{V.~Bovdi, O.Yu.~Dashkova, M.A.~Salim}
\dedicatory{Dedicated to the 70-th birthday of Professor Yaroslav Sysak}
\thanks{
The research was supported by the UAEU UPAR grant G00002160
}
\address{
V.A.~Bovdi and   M.A.~Salim\\
Department of Math. Sciences\\
UAE University\\
Al-Ain, UAE}
\email{vbovdi@gmail.com, msalim@uaeu.ac.ae}
\address{
O.Yu.~Dashkova\\
Department of Mathematics\\
The Branch of Moscow State University in  Sevastopol\\
Russia}
\email{odashkova@yandex.ua}
\begin{document}
\maketitle

\begin{abstract}
Let $FL_{\nu}(K)$ be the finitary linear group  of degree  $\nu$ over  an  associative   ring $K$   with unity. We prove that  the torsion subgroups of  $FL_{\nu}(K)$      are  locally finite for certain  classes of rings $K$.  A description  of some   f.g. solvable subgroups of   $FL_{\nu}(K)$ are given.
\end{abstract}

\section{Introduction}

Subgroups of the group $GL(F, V)$ of all automorphisms of a vector space $V$ over a field $F$  are called linear groups.  If $V$ has  finite dimension $n$ over $F$ then $GL(F, V)$ is usually denoted  by  $GL_{n}(F)$.
Finite dimensional linear groups  play an important role in various fields of mathematics, physics and natural sciences and have been  studied intensively.  When  $V$ is   an infinite dimensional vector space over $F$, the problem becomes more difficult.  The study of this class of groups  necessitates   certain additional restrictions (see   for example \cite{Dashkova_2, Dashkova_3, Dashkova_1,  Kurdachenko_4,  Dixon_Kurdachenko_Otal, Kurdachenko_5,  Kurdachenko_2, Kurdachenko_3, Kurdachenko_6}).
%
These types of problems have close relations to the
study of the group of non-singular  $(n \times n)$-matrices  $GL(F, K)$ where $K$ is an associative ring.
For an overview  of this topic we  recommend the survey paper
\cite{Vavilov_Stepanov}.

Let $K$ be an  associate  ring with unity and let  $\nu$ be a  linearly ordered set with  order $\leq$. Let $A=(m_{ij}(A))$ be a matrix  of degree $\nu$ over the ring $K$, where   $1 \leq i, j$ and $ i, j \in  \nu$.
Consider all possible subsets $\nu'  \subseteq  \nu$ such that outside $\nu' \times\nu'$  the  matrix  $A$ coincides with the identity  matrix. The intersection of all sets   $ \nu' $  with the given property itself
posesses  this property. Therefore it is the smallest set with such  property. It  is called the support of matrix $A$ and  denoted by $\supp(A)$. Matrices with finite supports are called the {\it  finitary}  matrices. Finitary  matrices are naturally multiplied as $m_{ij}(AB)= \sum_{k}m_{ik}(A)m_{kj}(B)$, where    the sum on the right side contains only  finite numbers of nonzero elements. It is obvious that  $supp(AB)  \subseteq  supp(A)  \cup  supp(B)$. For  all invertible matrixes  $A$   we have $supp(A^{-1}) = supp(A)$.  Hence the set $FL_{\nu}(K)$ of all invertible  finitary matrices of degree  $\nu$ over  $K$ forms  a group under multiplication, and is called   the
 {\it finitary linear group} of degree  $\nu$ over  $K$.

The subgroup $UT_{\nu}(K)$ of $FL_{\nu}(K)$  consisting  all $A\in FL_{\nu}(K)$  with the additional unitriangularity condition        $m_{ij}(A)= \delta_{ij}$ for $i \ge j$ is called the {\it finitary unitriangular  group}.

Finitary  linear groups of degree $\nu$ over a ring  $K$ were introduced  by Yu.I.~Merzlyakov  in \cite{Merzlyakov} and  the same paper  established that $UT_{\nu}(K)$ does not satisfy the normalizer condition for any ring $K$ with  unity and for any infinite linearly ordered set  $\nu$.

The investigation of  $FL_{\nu}(K)$ was started by   \cite{Levchuk} and actively continued in \cite{Kuzucuoglu_Levchuk_2, Kuzucuoglu_Levchuk, Levchuk_Radchenko, Merzlyakov}.

It is known that torsion  subgroups of finite-dimensional linear groups are locally finite (see \cite[Chapter  9]{Wehrfritz}).

The first two results of our paper are  related to  the structure  of  torsion subgroups of $FL_{\nu}(K)$.

\bigskip

\begin{theorem}\label{T:1}
Let   $K$ be   an   integral domain.  Each torsion subgroup $G$  of the finitary linear group  $FL_{\nu}(K)$ is  locally finite.
Moreover, if  $\nu$ is a countable  set,  then $G = \cup_{i \in {\mathbb N}} G_{i}$, where $G_{1} \leq \cdots \leq G_{i} \leq \cdots $  and the following conditions hold:
\begin{itemize}
\item[(i)] each $G_{i}$ contains a normal nilpotent subgroup $N_{i}$ such that  $G_{i}/N_{i}$ is a countable group;

\item[(ii)]  $\prod_{i=1}^{\infty} N_{i}$ is a  subgroup of $G$;

\item[(iii)]  each  group  $N_{1}N_{2} \cdots N_{i}/N_{i}$ is countable.\end{itemize}
\end{theorem}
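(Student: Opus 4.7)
The plan is to reduce local finiteness to the finite-dimensional case via the finite-support condition, then exploit countability of $\nu$ to build the chain $\{G_i\}$, and finally to extract $N_i$ using structure theory of locally finite linear groups.

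For local finiteness, let $H = \langle g_1, \ldots, g_k \rangle$ be a finitely generated subgroup of $G$ and set $S = \bigcup_j \mathrm{supp}(g_j)$, which is finite. Since $\mathrm{supp}(ab) \subseteq \mathrm{supp}(a) \cup \mathrm{supp}(b)$ and $\mathrm{supp}(a^{-1}) = \mathrm{supp}(a)$, every element of $H$ has support inside $S$, so $H$ embeds in $GL_{|S|}(K)$. Using that $K$ is an integral domain, one embeds further in $GL_{|S|}(F)$, where $F$ is the field of fractions of $K$. The theorem of Wehrfritz already cited in the introduction then says that every torsion subgroup of $GL_{|S|}(F)$ is locally finite, so $H$ is locally finite; being finitely generated, $H$ must be finite. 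Hence $G$ is locally finite.

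Now assume $\nu$ is countable and enumerate it as $\{v_1, v_2, \ldots\}$. Set $G_i = \{g \in G : \mathrm{supp}(g) \subseteq \{v_1, \ldots, v_i\}\}$. The properties of support give that $G_i$ is a subgroup, $G_1 \leq G_2 \leq \cdots$, and every $g \in G$ (having finite support) lies in some $G_i$, so $G = \bigcup_i G_i$. Each $G_i$ embeds naturally in $GL_i(F)$ as a locally finite subgroup, and (ii) together with (iii) will be cheap corollaries once (i) is in place: for $j < i$ we have $N_j \leq G_j \leq G_i$ and $N_i \trianglelefteq G_i$, so $N_j$ normalizes $N_i$; inductively $N_1 N_2 \cdots N_i$ is a subgroup of $G_i$, the directed union $\bigcup_i N_1 \cdots N_i = \prod_{i=1}^{\infty} N_i$ is a subgroup of $G$, and $N_1 \cdots N_i / N_i$ embeds in $G_i / N_i$, which is countable by (i).

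The central technical task is therefore (i): producing a normal nilpotent $N_i \trianglelefteq G_i$ with countable quotient, and this is where I expect the main obstacle. When $\mathrm{char}(F) = 0$ one can take $N_i = 1$: the Jordan--Schur theorem yields an abelian normal subgroup $A \trianglelefteq G_i$ of finite index; $A$ is simultaneously diagonalizable over $\bar F$ and its torsion places it in the countable group of roots of unity, so $G_i$ itself is already countable. When $\mathrm{char}(F) = p > 0$, a natural candidate is $N_i = O_p(G_i)$, the largest normal $p$-subgroup; as a $p$-subgroup of $GL_i(F)$ it is unipotent, hence nilpotent of class less than $i$. The delicate point is then to show that $G_i / O_p(G_i)$ is countable, which requires combining the Jordan-type analysis of the $p'$-structure (again placing an abelian normal subgroup in the countable group of $p'$-roots of unity inside $\bar{\mathbb{F}}_p$) with control over the residual $p$-elements of the quotient, possibly by enlarging $N_i$ beyond $O_p(G_i)$ to the Hirsch--Plotkin radical of $G_i$. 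This modular analysis is the step I would expect to consume the bulk of the work.
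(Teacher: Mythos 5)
Your reduction to the finite-dimensional case via supports, the embedding of a finitely generated subgroup into $GL_{|S|}(F)$ with $F$ the fraction field, the construction of the chain $G_1\leq G_2\leq\cdots$ with union $G$, and the deductions of (ii) and (iii) from (i) all coincide with the paper's argument. The genuine gap is in (i) in positive characteristic: you correctly identify $N_i=O_p(G_i)$ (the unipotent radical, nilpotent of class $<i$) as the natural candidate, but you leave unproved exactly the assertion that $G_i/N_i$ is countable, describing it as ``the delicate point'' to be handled by a Jordan-type analysis of the $p'$-structure, ``possibly by enlarging $N_i$'' to the Hirsch--Plotkin radical. That assertion is the entire content of part (i) in characteristic $p$, so as written the proof is incomplete there. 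Note also that the suggested enlargement would not be an admissible repair on its own: for a locally finite linear group the Hirsch--Plotkin radical is in general only locally nilpotent, while the theorem demands a nilpotent $N_i$.

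The paper closes precisely this point by citation rather than proof: its Fact 3 (Wehrfritz, \emph{Infinite Linear Groups}, Theorem 9.5) states that every torsion linear group is a countable extension of a unipotent, hence nilpotent, group; applied to the torsion linear group $G_i$ this yields $N_i$ with $G_i/N_i$ countable in all characteristics simultaneously, making your separate characteristic-zero discussion via Jordan--Schur unnecessary (though it is correct). If you either invoke that theorem or supply a proof that $G_i/O_p(G_i)$ is countable in characteristic $p$, your argument becomes a complete proof along essentially the same lines as the paper's.
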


\bigskip

\begin{theorem}\label{T:2}
Let $K$ be  a   commutative ring. Each torsion subgroup $G$  of the finitary linear group $FL_{\nu}(K)$ is  locally finite.
Moreover, if  $\nu$ is a countable set, then $G = \cup_{i \in {\mathbb N}} G_{i}$, where   $G_{1}\leq \cdots \leq G_{i} \leq \cdots $.

Let    $K$  be a Noetherian commutative ring. Therefore    the following conditions   hold:
\begin{itemize}
\item[(i)] each $G_{i}$ has a series  of normal subgroups $L_{i} \leq M_{i} \leq N_{i} \leq G_{i}$, where    $L_{i}$ is an abelian group, the quotient groups $M_{i}/L_{i}$ and $N_{i} /M_{i}$ are   nilpotent and $G_{i}/N_{i}$ is countable;

\item[(ii)] $\prod_{i=1}^{\infty} N_{i}$ is a  subgroup of $G$;

\item[(iii)]  each quotient   group  $N_{1}N_{2} \cdots N_{i}/N_{i}$ is countable.

\end{itemize}
\end{theorem}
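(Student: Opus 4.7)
The plan is to adapt the proof of Theorem~\ref{T:1} by treating the nilradical of the coefficient ring as an additional, separately-handled layer. For local finiteness, I would let $H$ be a finitely generated subgroup of a torsion subgroup $G\subseteq FL_{\nu}(K)$; its generators have finite support, so $H\subseteq GL_{n}(R)$ where $R$ is the subring of $K$ generated by all matrix entries of the generators and of their inverses. Since $R$ is a finitely generated commutative ring, Hilbert's basis theorem makes it Noetherian, its nilradical $\mathfrak{n}$ is then nilpotent (say $\mathfrak{n}^{m}=0$), and $R/\mathfrak{n}$ embeds in the finite product $\prod_{j=1}^{k} R/P_{j}$ over the minimal primes $P_{j}$, each quotient being an integral domain. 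The image of $H$ in each $GL_{n}(R/P_{j})$ is finitely generated and torsion over an integral domain, hence finite by Theorem~\ref{T:1}, so the image of $H$ in $GL_{n}(R/\mathfrak{n})$ is finite. The congruence kernel $H\cap(1+\mathrm{Mat}_{n}(\mathfrak{n}))$ has finite index in $H$ and lies in the nilpotent group $1+\mathrm{Mat}_{n}(\mathfrak{n})$, so it is a finitely generated nilpotent torsion group, hence finite. Therefore $H$ is finite, and $G$ is locally finite.

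For the structure statement I assume $K$ itself is Noetherian, enumerate $\nu=\{a_{1},a_{2},\ldots\}$, and set $G_{i}=G\cap GL_{i}(K)$, where $GL_{i}(K)\subseteq FL_{\nu}(K)$ is realized on the first $i$ elements; then $G=\bigcup_{i}G_{i}$. Let $\mathfrak{n}$ be the nilradical of $K$ with $\mathfrak{n}^{m}=0$, and let $\pi_{i}\colon GL_{i}(K)\to GL_{i}(K/\mathfrak{n})$ be the reduction. I would define $L_{i}=G_{i}\cap(1+\mathrm{Mat}_{i}(\mathfrak{n}^{m-1}))$ and $M_{i}=G_{i}\cap(1+\mathrm{Mat}_{i}(\mathfrak{n}))$: then $L_{i}$ is abelian because for $A,B\in\mathrm{Mat}_{i}(\mathfrak{n}^{m-1})$ the product $AB$ lies in $\mathrm{Mat}_{i}(\mathfrak{n}^{2m-2})=0$, so $(1+A)(1+B)=1+A+B$; and $1+\mathrm{Mat}_{i}(\mathfrak{n})$ is nilpotent via the central filtration $\{1+\mathrm{Mat}_{i}(\mathfrak{n}^{j})\}_{j}$, so $M_{i}$ is nilpotent and $M_{i}/L_{i}$ is nilpotent. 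For the next layer, the image $\pi_{i}(G_{i})$ embeds in $\prod_{j=1}^{k}GL_{i}(K/P_{j})$ through the minimal primes of $K$; each factor image is torsion in $GL_{i}$ over an integral domain, and Theorem~\ref{T:1} (applied with the finite, hence countable, index set) supplies there a normal nilpotent subgroup with countable quotient. Intersecting the pullbacks of these factor subgroups with $\pi_{i}(G_{i})$ yields a normal nilpotent subgroup $\overline{N_{i}}$ of $\pi_{i}(G_{i})$ with $\pi_{i}(G_{i})/\overline{N_{i}}$ countable; set $N_{i}=\pi_{i}^{-1}(\overline{N_{i}})\cap G_{i}$. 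Then $L_{i}\leq M_{i}\leq N_{i}\leq G_{i}$ are all normal in $G_{i}$, the quotients $M_{i}/L_{i}$ and $N_{i}/M_{i}\cong\overline{N_{i}}$ are nilpotent, and $G_{i}/N_{i}\cong\pi_{i}(G_{i})/\overline{N_{i}}$ is countable, giving (i). Items (ii) and (iii) follow since $\bigcup_{i}N_{i}$ generates a subgroup of $G$, and $N_{1}\cdots N_{i}\leq G_{i}$ embeds $N_{1}\cdots N_{i}/N_{i}$ into the countable group $G_{i}/N_{i}$.

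The principal obstacle is the construction of $\overline{N_{i}}$: Theorem~\ref{T:1} produces the desired normal nilpotent subgroup only in each factor $GL_{i}(K/P_{j})$ separately, and one must verify that their joint pullback to $\pi_{i}(G_{i})\subseteq\prod_{j}GL_{i}(K/P_{j})$ is normal in $\pi_{i}(G_{i})$, is nilpotent (a finite direct product of nilpotent groups is nilpotent), and has a countable quotient (a finite product of countable groups is countable). The remaining assertions reduce to routine manipulations with the congruence filtration and to Theorem~\ref{T:1}.
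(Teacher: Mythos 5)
Your argument is correct in substance, but it follows a genuinely different route from the paper. The paper handles a finitely generated torsion subgroup $H$ via Lemma~\ref{L:1} (producing an abelian normal subgroup $L$ by Kaluzhnin's stabilizer argument applied to the series $0\le C_B(H)\le B$ of the natural module over the finitely generated, hence Noetherian, subring $K_1$), then invokes Wehrfritz's Theorem 13.3 (Fact~\ref{F:1}) to get a unipotent-by-quasi-linear quotient, Fact~\ref{F:2} plus Schmidt's theorem for local finiteness, and Fact~\ref{F:3} for the countable quotient in the Noetherian case; the three layers $L_i\le M_i\le N_i$ come respectively from the stabilizer argument, the unipotent radical of $\mathrm{Aut}_{K_1}$, and the unipotent part of the quasi-linear image. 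You instead bypass Lemma~\ref{L:1} and Fact~\ref{F:1} entirely: you reduce modulo the nilradical and the finitely many minimal primes, so that everything over a domain is delegated to Theorem~\ref{T:1} (equivalently Facts~\ref{F:2}--\ref{F:3}), while the congruence subgroups $1+\mathrm{Mat}(\mathfrak{n}^{m-1})$ and $1+\mathrm{Mat}(\mathfrak{n})$ supply the abelian and nilpotent layers; local finiteness then needs only that a finitely generated torsion nilpotent group is finite, rather than Schmidt's theorem. This is more elementary and self-contained (Hilbert's basis theorem, nilpotency of the nilradical and finiteness of the set of minimal primes in a Noetherian ring, and standard congruence-subgroup computations), and it makes the reduction to the integral-domain case explicit; the paper's route via $\mathrm{Aut}_{K_1}(M)$ is heavier but does not single out the nilradical and yields the abelian layer already at the level of Lemma~\ref{L:1}, which the paper then reuses for Theorems~\ref{T:3}--\ref{T:4}. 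Two small points to tidy: treat the degenerate case $\mathfrak{n}=0$ (take $L_i=1$ rather than $\mathfrak{n}^{0}=K$), and in (ii) make explicit, as in the paper's proof of Theorem~\ref{T:1}, that $N_1\cdots N_i$ is a subgroup because $N_1\cdots N_{i-1}\le G_{i-1}\le G_i$ and $N_i$ is normal in $G_i$.
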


\bigskip

The local structure of  $FL_{\nu}(K)$ is given by the following

\begin{theorem}\label{T:3}
Let   $G$ be  a   f.g.  solvable subgroup of $FL_{\nu}(K)$ over a commutative ring $K$.
\begin{itemize}
\item[(i)]  If $K$ is an  integral domain    then $G$ contains a  normal nilpotent subgroup $N$  such that   $G/N$ is polycyclic.

\item[(ii)]  If $K$  is a  commutative ring  then $G$ contains a series of normal subgroups $L \leq N \leq G$, where    $L$ is  abelian, $N/L$ is  nilpotent-by-nilpotent   and $G/N$ is  polycyclic.

\end{itemize}
\end{theorem}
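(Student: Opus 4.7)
\emph{Reduction.} Fix generators $A_1,\dots,A_k$ of $G$ and set $S=\bigcup_{i}\mathrm{supp}(A_i)$, a finite subset of $\nu$. Every word in the $A_i^{\pm 1}$ has support in $S$ and coincides with the identity outside $S\times S$, so restriction to $S$ gives an injection $G\hookrightarrow GL_n(K')$, where $n=|S|$ and $K'$ is the subring of $K$ generated by the entries of $A_1^{\pm 1},\dots,A_k^{\pm 1}$. Since $K'$ is a finitely generated commutative ring, it is Noetherian, and in case (i) it is an integral domain. From here we work inside $GL_n(K')$.

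\emph{Part (i).} Let $F$ be the field of fractions of $K'$, so $G\le GL_n(\overline F)$. By Mal'cev's theorem on solvable linear groups, there is a subgroup $H\le G$ of finite index whose conjugate inside $GL_n(\overline F)$ is upper triangular; let $U\le H$ be its subgroup of unipotent elements. Then $U$ is normal in $H$ and nilpotent, with $H/U$ abelian. Set $N:=\bigcap_{g\in G}U^g$; since $H\le N_G(U)$ and $[G:H]<\infty$, only finitely many conjugates occur, so $N$ is normal in $G$ and nilpotent (being contained in $U$). The diagonal map embeds $G/N$ into the finite product $\prod_g G/U^g$, whose factors each contain $H/U^g$ (abelian) with finite index. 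Thus $G/N$ is virtually abelian and, being finitely generated, is polycyclic.

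\emph{Part (ii).} Let $I=\mathrm{Nil}(K')$; by Noetherianity $I^m=0$ for some $m$, and $K'$ has finitely many minimal primes $P_1,\dots,P_r$ with $I=\bigcap_jP_j$. Set $L:=G\cap(1+M_n(I^{m-1}))$ and $M:=G\cap(1+M_n(I))$; both are normal in $G$ as kernels of the reductions modulo $I^{m-1}$ and $I$. Since $(I^{m-1})^2\subseteq I^m=0$, the map $1+X\mapsto X$ identifies $L$ with an additive subgroup of $M_n(I^{m-1})$, so $L$ is abelian. The descending congruence filtration $1+M_n(I^k)$ satisfies $[1+M_n(I),1+M_n(I^k)]\subseteq 1+M_n(I^{k+1})$, exhibiting $1+M_n(I)$ as nilpotent of class at most $m-1$; hence $M$ is nilpotent and so is $M/L$. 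Next, $K'/I\hookrightarrow\prod_jK'/P_j$ induces $G/M\hookrightarrow\prod_jGL_n(K'/P_j)$; let $G_j$ be the image of $G/M$ in the $j$-th factor. By part (i), each $G_j$ has a normal nilpotent subgroup $N_j$ with $G_j/N_j$ polycyclic, so $\overline N:=(G/M)\cap\prod_jN_j$ is normal nilpotent in $G/M$ with $(G/M)/\overline N$ polycyclic (it embeds into $\prod_j G_j/N_j$). Pulling back gives $N\supseteq M$, normal in $G$, with $N/M$ nilpotent and $G/N$ polycyclic. Then $L\le N\le G$ is the desired series, with $N/L$ nilpotent-by-nilpotent via $M/L$.

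\emph{Main obstacle.} The essential non-trivial input is Mal'cev's theorem in part (i); one must identify $U$ intrinsically as the unipotent part of $H$ so that the $G$-conjugates $U^g$ are well-defined, and verify that $[G:N_G(U)]<\infty$ so the $G$-core is a finite intersection. Part (ii) then consists of standard reductions --- finitary to finite dimension, commutative to Noetherian, reduction modulo the nilradical, and decomposition via minimal primes --- each routine but requiring careful bookkeeping.
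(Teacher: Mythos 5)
Your argument is correct in substance, and the two parts relate to the paper differently. Part (i) follows essentially the paper's route: reduce by supports to $GL_n$ over the quotient field and invoke Mal'cev's theorem (the paper cites Wehrfritz, Theorem 3.6, and takes $N$ to be the derived subgroup of a \emph{normal} triangularizable subgroup of finite index, which is then automatically normal in $G$). Your variant with $N=\bigcap_{g\in G}U^{g}$ also works, but one step is stated loosely: since $U^{g}$ need not be normal in $G$, the quotients $G/U^{g}$ are only coset spaces, so ``the diagonal map embeds $G/N$ into $\prod_{g}G/U^{g}$'' is not a group embedding, and ``$H/U^{g}$'' is not even defined. Repair it either by taking $H$ normal in $G$ (the standard form of Mal'cev's theorem gives this, and then $U$ is already normal in $G$ because conjugation preserves $H$ and unipotence), or by intersecting with the core $H_{0}=\bigcap_{g}H^{g}$: each $H_{0}/(H_{0}\cap U^{g})$ embeds in the abelian group $H^{g}/U^{g}$, so $H_{0}/(H_{0}\cap N)$ is finitely generated abelian and $G/N$ is abelian-by-finite. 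Note also that ``finitely generated virtually abelian'' alone only gives polycyclic-by-finite; you need the (immediate) solvability of $G/N$ to conclude polycyclic.

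Part (ii) takes a genuinely different route from the paper. The paper proceeds via its Lemma~\ref{L:1} (support reduction plus a Kaluzhnin-type stabilizer argument producing a normal abelian $L$ with $G/L$ inside the automorphism group of a finitely generated module over a Noetherian ring), then applies Fact~\ref{F:1} (Wehrfritz, Theorem 13.3) to obtain a normal unipotent subgroup with quasi-linear quotient, and finally uses the structure of finitely generated solvable quasi-linear groups. You instead pass to the finitely generated (hence Noetherian) subring $K'$, take the congruence subgroup $M=G\cap(1+M_{n}(I))$ modulo the nilpotent nilradical $I$, and reduce modulo the finitely many minimal primes so that part (i) applies in each integral-domain quotient; this is more elementary and self-contained (it bypasses quasi-linearity and the module-theoretic Fact~\ref{F:1}) and in fact yields a slightly sharper conclusion, since your $N$ is itself nilpotent-by-nilpotent and one could take $L=1$. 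One small repair: the definition $L=G\cap(1+M_{n}(I^{m-1}))$ and the computation $(I^{m-1})^{2}\subseteq I^{m}=0$ presuppose $m\ge 2$; when $I=0$ (i.e. $m=1$) simply take $L=M=1$. With these minor fixes the proposal is a valid alternative proof.
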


\bigskip

\begin{corollary}\label{C:1}
Let   $G$ be  a  f.g.  subgroup of $FL_{\nu}(K)$ over a commutative ring $K$.
\begin{itemize}
\item[(i)]  If $K$ is an  integral domain  then  either $G$ contains a  normal nilpotent subgroup $N$ such that $G/N$  is polycyclic-by-finite or  $G$  contains a non-cyclic free subgroup.

\item[(ii)]  If $K$  is a  commutative ring  then  $G$ has a  normal abelian-by-nilpotent subgroups $U$  such  that   either $G/U$ contains a non-cyclic free subgroup or $G$  has a series of normal subgroups $L  \leq U \leq N \leq G$  where $L$ is an  abelian subgroup, $U/L$ and  $N/U$   are nilpotent  and $G/N$ is  polycyclic-by-finite.

\item[(iii)] If $K$ is   a commutative ring and   $G$  is  a   f.g.  subgroup of   $FL_{\nu}(K)$   with  the maximal condition on its subgroups,   then  either  $G$  is a   polycyclic-by-finite  group or $G$   contains a non-cyclic free subgroup.

\end{itemize}
\end{corollary}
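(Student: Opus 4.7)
The strategy is to combine Theorem~\ref{T:3} with the Tits alternative and a standard core/characteristic-subgroup argument. Since $G$ is finitely generated and each generator has finite support, the union $S$ of the supports of a finite generating set is a finite subset of $\nu$, and every element of $G$ has support inside $S$; hence $G$ embeds in $GL_n(K)$ with $n=|S|$. Throughout the proof we treat $G$ as a f.g.\ linear group over $K$.

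For part~(i), let $F=\operatorname{Frac}(K)$, so $G\leq GL_n(F)$. The Tits alternative gives either a non-cyclic free subgroup of $G$ (and we are done), or a normal solvable subgroup $H\triangleleft G$ of finite index. In the solvable case, $H$ is f.g., and Theorem~\ref{T:3}(i) provides a normal nilpotent $N_0\triangleleft H$ with $H/N_0$ polycyclic. Passing to the $G$-core $N=\bigcap_{g\in G}N_0^{\,g}$ keeps $N\leq N_0$ nilpotent and makes $N$ normal in $G$. Since $H\triangleleft G$, each $H/N_0^{\,g}\cong H/N_0$ is polycyclic, so each $G/N_0^{\,g}$ is polycyclic-by-finite; as only finitely many conjugates of $N_0$ occur (at most $[G:H]$), the injection $G/N\hookrightarrow\prod_{g}G/N_0^{\,g}$ makes $G/N$ polycyclic-by-finite.

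For part~(ii), replace $K$ by the f.g.\ (hence Noetherian) commutative subring $K_0\subseteq K$ generated by the matrix entries of the generators of $G$. The nilradical $J=\operatorname{nil}(K_0)$ is nilpotent, so the congruence subgroup $\Gamma(J)\triangleleft GL_n(K_0)$ is nilpotent and $U_1=G\cap\Gamma(J)$ is normal nilpotent in $G$. The quotient $\bar G=G/U_1$ embeds in $GL_n(K_0/J)\hookrightarrow\prod_i GL_n(K_0/P_i)$ over the finitely many minimal primes $P_i$ of $K_0$, a finite product of linear groups over integral domains. By Tits applied component-by-component, either some projection contains a free subgroup of rank $2$---which lifts through the surjection $G\twoheadrightarrow\bar G$ to a free subgroup of $G$ (take preimages of free generators), and then $U=1$ satisfies the statement---or every projection is virtually solvable, so $\bar G$ and hence $G$ is virtually solvable. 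In the latter case, take a normal solvable finite-index $H\triangleleft G$ and apply Theorem~\ref{T:3}(ii): there are $L_0\leq N_0\leq H$ with $L_0$ abelian, $N_0/L_0$ nilpotent-by-nilpotent, $H/N_0$ polycyclic. Let $L=\bigcap_g L_0^{\,g}$ and $N=\bigcap_g N_0^{\,g}$; as in part~(i), both are normal in $G$, $L$ is abelian and $G/N$ is polycyclic-by-finite, and the injection $N/L\hookrightarrow\prod_g N_0^{\,g}/L_0^{\,g}$ shows $N/L$ is nilpotent-by-nilpotent. The delicate step is producing the middle term $U$ \emph{normal in $G$}: choose $d$ such that $\gamma_{d+1}(N/L)$ is nilpotent and $(N/L)/\gamma_{d+1}(N/L)$ is nilpotent, and define $U\leq N$ by $U/L=\gamma_{d+1}(N/L)$. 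As a term of the lower central series, $U/L$ is characteristic in $N/L$; since both $L$ and $N$ are normal in $G$, the induced $G$-action on $N/L$ preserves this characteristic subgroup, so $U\triangleleft G$. The chain $L\leq U\leq N\leq G$ then has $U$ abelian-by-nilpotent as required.

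Part~(iii) is a direct consequence: a non-cyclic free group fails the maximal condition on subgroups, so under that condition the free-subgroup alternative of~(i)/(ii) is excluded. In the structural alternative of~(ii), the maximal condition descends to $L$, $U/L$, and $N/U$; an abelian group with the maximal condition is finitely generated abelian, and a nilpotent group with the maximal condition is polycyclic, so each of these is polycyclic. Thus $N$ is polycyclic and, together with $G/N$ polycyclic-by-finite, $G$ itself is polycyclic-by-finite.
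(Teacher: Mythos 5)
Your argument is correct in substance, but it takes a genuinely different route from the paper's, and the comparison is worth recording. In part (i) the paper also embeds $G$ in $GL_n(F)$ for $F$ the quotient field and invokes the Tits alternative, but it then disposes of normality by asserting that the nilpotent subgroup supplied by Theorem~\ref{T:3}(i) is characteristic in $H$ ``by construction''; your normal-core argument $N=\bigcap_{g\in G}N_0^{\,g}$ avoids relying on that characteristicity claim and is the more robust fix. One slip in your wording: $N_0^{\,g}$ need not be normal in $G$, so ``each $G/N_0^{\,g}$ is polycyclic-by-finite'' and the injection $G/N\hookrightarrow\prod_g G/N_0^{\,g}$ are not meaningful as statements about quotient groups; the intended (and correct) argument is that $H/N$ embeds in the finite product $\prod_g H/N_0^{\,g}$ of polycyclic groups, so $H/N$ is polycyclic and then $G/N$ is polycyclic-by-finite since $[G/N:H/N]=[G:H]<\infty$ --- the same repair applies to the core step in your part (ii). In part (ii) the divergence is real: the paper first manufactures the distinguished normal subgroup $U$ from Lemma~\ref{L:1} and Fact~\ref{F:1} (an abelian $L$ with $U/L$ the unipotent stabilizer, so that $G/U$ is quasi-linear) and then applies the Tits alternative to the quasi-linear quotient $G/U$, so when a free subgroup occurs it sits in $G/U$ for that one fixed $U$. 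You instead pass to the f.g.\ Noetherian subring $K_0$, factor out its nilpotent nilradical via a nilpotent congruence subgroup, split over the finitely many minimal primes, apply Tits componentwise over integral domains, and in the virtually solvable case assemble the chain by taking $G$-cores of the subgroups from Theorem~\ref{T:3}(ii), making the middle term normal in $G$ via the characteristic subgroup $\gamma_{d+1}(N/L)$; in the free case you take $U=1$. This does satisfy the statement as written ($U=1$ is a normal abelian-by-nilpotent subgroup and $G/U=G$ contains the free subgroup), though it realizes the dichotomy with a different $U$ in each branch, whereas the paper's proof produces one $U$ ready-made from Fact~\ref{F:1}; your $\gamma_{d+1}$ device is exactly what is needed to compensate for not having that canonical $U$. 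For part (iii) the paper quotes the Kargapolov--Merzlyakov result that a f.g.\ solvable-by-finite group with the maximal condition is polycyclic-by-finite, while you extract polycyclicity of $N$ factor-by-factor from the chain of (ii); both are valid, and yours is slightly more self-contained.
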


\bigskip

\begin{theorem}\label{T:4}
Let $K$ be    a commutative  ring.  Each subgroup of   $FL_{\nu}(K)$  with the minimal condition on its subgroups   is  locally finite.
\end{theorem}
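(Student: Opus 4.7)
The plan is to deduce Theorem \ref{T:4} as a short consequence of Theorem \ref{T:2}. Let $G \le FL_\nu(K)$ satisfy the minimal condition on subgroups. To show $G$ is locally finite, I must show that every finitely generated subgroup $H \le G$ is finite.

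First I would reduce to showing that $G$ is a torsion group. If $g \in G$ had infinite order, then the cyclic subgroup $\gp{g}$ would contain the strictly descending chain
\[
\gp{g} > \gp{g^{2}} > \gp{g^{4}} > \cdots > \gp{g^{2^n}} > \cdots,
\]
contradicting the minimal condition inherited by $\gp{g}$ from $G$. Hence every element of $G$ has finite order, i.e.\ $G$ is a torsion subgroup of $FL_\nu(K)$.

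Next I would apply Theorem \ref{T:2}: since $K$ is commutative, any torsion subgroup of $FL_\nu(K)$ is locally finite. Therefore $G$ itself is locally finite, which is exactly the conclusion of Theorem \ref{T:4}.

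There is really no substantial obstacle here; the argument is a two-line reduction. The only thing to check carefully is the inheritance of the minimal condition to cyclic subgroups (used to rule out elements of infinite order), and the fact that Theorem \ref{T:2} is stated for an arbitrary commutative ring $K$, so no Noetherian hypothesis is needed for this application. If one prefers to avoid the preliminary step of verifying torsionness and instead work directly through a finitely generated $H = \gp{h_1,\dots,h_k}$, one can observe that $\bigcup_i \supp(h_i)$ is a finite subset of $\nu$ of some cardinality $n$, so $H$ embeds in $GL_n(K)$; combined with the minimal condition this again forces $H$ to be torsion, and then Theorem \ref{T:2} finishes the argument by producing finiteness of $H$.
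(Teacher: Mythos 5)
Your proof is correct, but it takes a genuinely different (and shorter) route than the paper. You first observe that the minimal condition forces $G$ to be torsion, since an element $g$ of infinite order would give the strictly descending chain $\gp{g} > \gp{g^{2}} > \gp{g^{4}} > \cdots$, and then you invoke the first part of Theorem~\ref{T:2} (which, as you note, needs only commutativity of $K$, no Noetherian hypothesis) to conclude local finiteness. The paper instead argues through Corollary~\ref{C:1}: for a f.g.\ subgroup $H \le G$, the dichotomy of Corollary~\ref{C:1}(i)--(ii) says $H$ is either solvable-by-finite or contains a non-cyclic free subgroup; the free alternative is excluded because a free group of rank $\ge 2$ (indeed already an infinite cyclic group) violates the minimal condition, and then the Kargapolov--Merzlyakov result that a solvable-by-finite group with the minimal condition is Chernikov shows $H$ is a f.g.\ Chernikov group, hence finite. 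Your reduction is more economical given that Theorem~\ref{T:2} is already established, and it bypasses the Tits-alternative machinery and Chernikov theory entirely; the paper's route, on the other hand, is independent of Theorem~\ref{T:2} and yields the slightly finer structural fact that every f.g.\ subgroup is Chernikov (hence finite) directly from the dichotomy. There is no circularity in your argument, since Theorem~\ref{T:2} is proved from Lemma~\ref{L:1} and Facts~\ref{F:1}--\ref{F:3} only, so both proofs are legitimate; yours is simply a two-line corollary of earlier results rather than a new application of Corollary~\ref{C:1}.
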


\section{Preliminary results and Proofs}

Let $M$ be  a right $KG$-module, where $KG$ is the group ring of  a group $G$ over a  ring $K$. The {\it centralizer} of $m\in M$ in $G$ is denoteed by $C_G(m)=\{ g\in G\mid mg=m\}$. Let $Aut_{K}(M)$ be the group of $K$-automorphisms  of the module $M$.

Let  $G\leq FL_{\nu}(K)$.  Define
the following right $KG$-module
\[
\mathfrak{A}= \begin{cases}
\bigoplus_{\alpha =1}^{\nu}A_{\alpha } & \text{if $\nu$ is  an   unlimited ordinal  number};\\
  \bigoplus_{\alpha < \nu}A_{\alpha}&\text{  if $\nu$ is a  limited  ordinal number,}
\end{cases}
\]
in which each   $A_{\alpha}$ is isomorphic to the additive group of  $K$ for any ordinal  number  $\alpha$.    In the sequel of our paper  we always assume that
\begin{equation}\label{E:1}
G \not = C_{G}(\mathfrak{A}).
\end{equation}

The properties of the automorphism groups of f.g. modules over commutative rings  play  an important role in the studies of this class of groups (see \cite[Chapter  13]{Wehrfritz}).

\begin{fact}\label{F:1}(\cite[Theorem  13.3]{Wehrfritz})
Let $M$ be a f.g. $K$-module over  a  Noetherian commutative ring $K$. The group   $Aut_{K}(M)$ contains a normal subgroup $U$  stabilizing  a finite series  of submodules of $M$ such that  $Aut_{K}(M)/U$ is  quasi-linear. In particular  $U$ is unipotent and nilpotent (as an abstract group).
\end{fact}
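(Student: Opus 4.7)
The plan is to build $U$ as the stabilizer of a canonical finite filtration of $M$ whose composition factors are cyclic modules of the form $K/P$ for a prime ideal $P\subset K$. The starting point is the standard commutative-algebra observation that, because $K$ is Noetherian and $M$ is finitely generated, $M$ admits a finite filtration
\[
0 = M_0 \subset M_1 \subset \cdots \subset M_n = M
\]
with $M_i/M_{i-1} \cong K/P_i$ for primes $P_i \subset K$. Concretely, one picks an associated prime $P_1$ of $M$, finds $0 \neq m_1 \in M$ with $\mathrm{Ann}(m_1)=P_1$, sets $M_1 = m_1K$, and iterates on $M/M_1$; the Noetherian condition forces termination.

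Given such a filtration, I would define $U$ to be the subgroup of $Aut_K(M)$ consisting of those $\phi$ that induce the identity on every quotient $M_i/M_{i-1}$. An easy induction shows that $U$ is unipotent in the sense that $(\phi-1)^n=0$ on $M$: the operator $\phi-1$ shifts $M_i$ into $M_{i-1}$, so its $n$-fold composite vanishes, which forces $U$ to be nilpotent of class at most $n-1$ as an abstract group via the commutator identity for upper-unitriangular endomorphisms.

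For the quotient side, the restriction map $Aut_K(M)\to\prod_i Aut_K(M_i/M_{i-1})$ has kernel exactly $U$, so $Aut_K(M)/U$ embeds into this direct product. Since each $Aut_K(K/P_i)$ is the group of units of the domain $K/P_i$, it embeds into the multiplicative group $F_i^{*}$ of the quotient field of $K/P_i$, i.e.\ into $GL_1(F_i)$, placing $Aut_K(M)/U$ inside a finite product of one-dimensional linear groups over fields; this is precisely what ``quasi-linear'' is designed to record.

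The main obstacle is normality of $U$: an arbitrary $\phi\in Aut_K(M)$ need not preserve the chosen filtration, because different choices of associated primes lead to different series. To fix this I would replace the ad-hoc chain by a canonical one that is characteristically invariant --- for instance, one built from the ascending chain of submodules $\mathrm{Ann}_M(I^k)$ where $I$ is the nilradical of $K/\mathrm{Ann}(M)$, combined with the (finitely many) $P$-primary submodules corresponding to the associated primes of $M$. Every $K$-automorphism preserves each such piece, so the stabilizer of the resulting refined series is automatically normal in $Aut_K(M)$; verifying that the factors of this canonical series are still cyclic of prime type (or at least have quasi-linear automorphism groups) is the delicate point that I expect to absorb most of the work.
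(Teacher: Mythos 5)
Note first that the paper offers no proof of this statement: it is quoted verbatim as Fact~\ref{F:1} with a citation to \cite[Theorem 13.3]{Wehrfritz}, so the only fair comparison is with the argument in that source, and with correctness as such. Your attempt has a genuine gap, and it sits exactly where you suspect. A prime filtration $0=M_0\subset\cdots\subset M_n=M$ with $M_i/M_{i-1}\cong K/P_i$ exists, and the stabilizer of such a series acting trivially on the factors is indeed unipotent and nilpotent (Kaluzhnin); that part is fine. But the filtration depends on choices, and an arbitrary $\phi\in Aut_K(M)$ need not map each $M_i$ to itself. Consequently not only is the normality of $U$ unproved, but your ``restriction map $Aut_K(M)\to\prod_i Aut_K(M_i/M_{i-1})$'' is not defined on all of $Aut_K(M)$ --- it is defined only on the subgroup preserving the series --- so the claimed embedding of $Aut_K(M)/U$ into $\prod_i GL_1(F_i)$ does not exist as stated. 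The proposed repair (pass to a canonical series built from $\mathrm{Ann}_M(I^k)$ and primary components) does not close the gap: embedded primary components are themselves not unique, and, more importantly, the factors of any such characteristic series are no longer cyclic of the form $K/P$, so the quasi-linearity of the quotient --- which you explicitly defer as ``the delicate point'' --- is precisely the content of the theorem and remains unproved. In short, you have verified the easy half (unipotence/nilpotence of a series stabilizer) and left the essential half (a fully invariant series whose stabilizer has quasi-linear quotient) as a declared intention.

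For orientation, the standard route (essentially that of \cite[Chapter 13]{Wehrfritz}) is to work with a series that is fully invariant from the outset: with $I$ the preimage in $K$ of the nilradical of $K/\mathrm{Ann}(M)$ one has $I^t M=0$ for some $t$ since $K$ is Noetherian, and $M\supseteq IM\supseteq I^2M\supseteq\cdots\supseteq I^tM=0$ is preserved by every $K$-automorphism, so its stabilizer $U$ is automatically normal and every automorphism genuinely acts on each factor. Each factor is a f.g. module over the semiprime Noetherian ring $K/I$, whose total ring of fractions is a finite direct product of fields $F_1\times\cdots\times F_m$ (fraction fields of $K/P_j$ for the finitely many minimal primes $P_j$); killing the torsion submodule and extending scalars embeds the relevant automorphism images into a finite product of groups $GL_{n_j}(F_j)$ --- note finite-dimensional $GL_{n_j}$, not just $GL_1$, which is what ``quasi-linear'' requires here. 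Making the torsion/localization step precise is the real work, and it is exactly what your sketch omits.
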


Recall that a group  is called {\it quasi-linear} if it  is isomorphic to a subgroup of a direct product of  the finite number of finite-dimensional linear groups (see \cite[p.\,186]{Wehrfritz}).

We  also use the following  facts.

\begin{fact}\label{F:2}(\cite[Theorem  9.1]{Wehrfritz})
Each  torsion subgroup of $GL_{n}(F)$ is locally finite.
 \end{fact}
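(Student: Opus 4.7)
The plan is to reduce local finiteness to the claim that every finitely generated torsion subgroup $H \leq GL_{n}(F)$ is finite. So I fix such an $H = \langle h_{1}, \ldots, h_{r} \rangle$ and let $R \subseteq F$ be the subring generated by the entries of the $h_{i}^{\pm 1}$; then $R$ is a finitely generated integral domain and $H \leq GL_{n}(R)$. This lets me work inside $GL_{n}(R)$ and split the argument according to the characteristic of $F$.

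If $\mathrm{char}\, F = 0$, the fraction field of $R$ is a finitely generated extension of $\mathbb{Q}$ and hence embeds into $\mathbb{C}$, so $H$ realises as a finitely generated torsion subgroup of $GL_{n}(\mathbb{C})$. Schur's classical theorem on periodic complex matrix groups then applies: one may use the Burnside density theorem to reduce to the irreducible case in which $H$ spans $M_{n}(\mathbb{C})$, and then a trace argument forces the coordinates of any $g \in H$ with respect to a basis chosen from $H$ to be algebraic integers of bounded degree that are sums of roots of unity, leaving only finitely many possibilities and making $H$ finite.

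If $\mathrm{char}\, F = p > 0$, the ring $R$ is a finitely generated $\mathbb{F}_{p}$-algebra, hence (by the Nullstellensatz) a Jacobson domain whose maximal ideals $\mathfrak{m}$ all have finite residue fields. Reduction modulo $\mathfrak{m}$ produces homomorphisms $\pi_{\mathfrak{m}} : GL_{n}(R) \to GL_{n}(R/\mathfrak{m})$ with finite image, and $\bigcap_{\mathfrak{m}} \ker \pi_{\mathfrak{m}} = \{1\}$ because $R$ is reduced and Jacobson. Any torsion element of $\ker \pi_{\mathfrak{m}}$ has the form $1 + N$ with $N$ having entries in $\mathfrak{m}$, and such an element is unipotent of $p$-power order. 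I would then invoke Fact~\ref{F:1} to extract, for a suitable choice of $\mathfrak{m}$, a normal unipotent subgroup of finite index in $H$; being finitely generated, torsion and nilpotent, such a subgroup is finite by Fitting's theorem, and finiteness of $H$ follows.

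The hard part will be the positive-characteristic case: residual finiteness of $GL_{n}(R)$ is immediate, but residual finiteness alone does not force a finitely generated torsion group to be finite. The missing ingredient is uniform control on the semisimple torsion across the finite residue fields (whose multiplicative groups are cyclic of bounded order only after the residue field is fixed) together with the Noetherian structure of $R$, exploited through Fact~\ref{F:1}, to finite-dimensionalise the unipotent torsion. This structural bookkeeping is where the real content of the theorem sits.
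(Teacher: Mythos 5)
The paper offers no proof of this statement at all: it is quoted verbatim from Wehrfritz (Theorem~9.1 of the cited book), so your attempt has to stand on its own, and as written it does not. Your characteristic-zero half is essentially Schur's classical argument (Burnside density plus a trace argument over a finitely generated field) and is fine as a sketch, modulo the imprecision that it is the traces $\mathrm{tr}(gb_i)$, with $b_i$ a basis of $M_n$ chosen inside $H$, that are sums of roots of unity, and that the finiteness of their possible values rests on the fact that a finitely generated field of characteristic zero contains only finitely many roots of unity of degree at most $n$ over it. The positive-characteristic half, however, has a genuine gap, which you concede yourself: you assert, but do not prove, the key lemma that every torsion element of $\ker\pi_{\mathfrak{m}}$ is unipotent of $p$-power order (the prime-to-$p$ part requires an argument: eigenvalues of such an element are $q$-th roots of unity with $q$ prime to $p$, they are congruent to $1$ at a place over $\mathfrak{m}$, and separability of $x^q-1$ in characteristic $p$ forces them to equal $1$). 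Worse, your proposed rescue via Fact~\ref{F:1} cannot work as stated: Fact~\ref{F:1} produces a normal unipotent subgroup whose quotient is quasi-linear, not finite, and a quasi-linear group is a subgroup of a product of finite-dimensional linear groups over fields, so using it to control the torsion of that quotient is circular with respect to the very statement you are proving.

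The correct completion is simpler than the ``uniform control across all residue fields'' you call for: a single maximal ideal $\mathfrak{m}$ of the finitely generated $\mathbb{F}_p$-algebra $R$ suffices. Indeed $H\cap\ker\pi_{\mathfrak{m}}$ is normal of finite index in $H$ (the residue field is finite), and once the lemma above shows all its elements are unipotent, Kolchin's theorem conjugates it into the unitriangular group, so it is nilpotent; being of finite index in the finitely generated group $H$ it is finitely generated, and a finitely generated nilpotent torsion group is finite (this is not Fitting's theorem, which concerns products of normal nilpotent subgroups, but the standard fact about finitely generated nilpotent groups). Hence $H$ is finite. With these repairs your reduction-to-$R$ strategy does yield a valid proof, but as submitted the characteristic-$p$ case is incomplete and its appeal to Fact~\ref{F:1} is both insufficient (no finite index) and circular.
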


\begin{fact}\label{F:3}(\cite[Theorem  9.5]{Wehrfritz})
Each  torsion  linear group is a countable extension of a unipotent (and so nilpotent) group.
 \end{fact}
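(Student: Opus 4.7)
The plan is to view $G$ as a subgroup of $GL_n(F)$ for some field $F$ and natural number $n$, and to produce a normal unipotent subgroup $U$ of $G$ with $G/U$ countable. I would begin by replacing $F$ with its algebraic closure $\overline{F}$; this does not change $G$ as an abstract group but ensures Jordan decomposition and Kolchin's theorem are available. By Fact~\ref{F:2}, the torsion hypothesis immediately gives that $G$ is locally finite, and this is the essential input for everything that follows.

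Next I would construct $U$ as the unipotent radical of $G$, i.e.\ the largest normal subgroup of $G$ consisting of unipotent matrices. In characteristic $0$ a nontrivial unipotent matrix has infinite order, so $U=1$ automatically. In characteristic $p>0$ I would take $U=O_p(G)$, the maximal normal $p$-subgroup; its existence as the \emph{largest} normal unipotent subgroup follows because the subgroup generated by two normal unipotent (equivalently, locally $p$-) subgroups of a locally finite linear group is again unipotent. In either case Kolchin's theorem shows that $U$ is simultaneously conjugate into the upper unitriangular group of $GL_n(\overline{F})$, so $U$ is nilpotent of class at most $n-1$.

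The heart of the matter is to verify that $G/U$ is countable. Here $G/U$ is still a locally finite linear group of degree $n$, but now it has trivial unipotent radical. I would exploit character-theoretic rigidity: every element of $G/U$ has finite order, so its eigenvalues are roots of unity and its trace lies in the countable cyclotomic subring $\mathbb{Z}[\zeta_\infty]\subset\overline{F}$. Because $G/U$ has no nontrivial unipotent normal subgroup, a Maschke/Clifford argument (reducing to completely reducible representations, and in characteristic $p$ lifting $p$-regular parts via Brauer--Nesbitt) shows $G/U$ is determined by its values on a countable family of group-theoretic invariants, so $G/U$ can be realised inside $GL_n(K)$ for some countable algebraically closed subfield $K\subseteq\overline{F}$. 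A subgroup of $GL_n(K)$ with $K$ countable is countable, which gives the required bound.

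The main obstacle is precisely this last countability assertion: the Kolchin and nilpotency statements about $U$ are standard, but the bound on $|G/U|$ rests on the full structure theory of locally finite linear groups (Platonov-type results and the Brauer lifting used throughout Wehrfritz's Chapter~9). Handling the positive-characteristic case, where one must separately control $p$-singular and $p$-regular elements modulo $U$ and then patch them together, is the delicate step; once countability of $G/U$ is in hand the conclusion that $G$ is a countable extension of the nilpotent unipotent group $U$ is immediate.
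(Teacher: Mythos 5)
This statement is quoted in the paper as Fact~\ref{F:3} directly from \cite[Theorem 9.5]{Wehrfritz}; the paper supplies no proof of its own, so there is nothing internal to compare your argument against. Judged on its own terms, your outline follows the standard route of Wehrfritz's Chapter~9: pass to the algebraic closure, split off the unipotent radical $U$ (trivial in characteristic $0$, equal to $O_p(G)$ in characteristic $p$, nilpotent of class $<n$ by Kolchin), and then show the quotient is countable. The skeleton is right, and the reductions you make are the ones the cited proof makes.

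The step you flag as ``the heart of the matter'' is, however, exactly the step you do not actually carry out. The phrase ``$G/U$ is determined by its values on a countable family of group-theoretic invariants, so $G/U$ can be realised inside $GL_n(K)$ for some countable algebraically closed subfield $K$'' is a gloss, not an argument: what is needed (and what Wehrfritz proves) is the precise statement that a periodic \emph{completely reducible} linear group of degree $n$ over an algebraically closed field is conjugate to a subgroup of $GL_n$ over the algebraic closure of the prime field, which is countable. Getting there requires first replacing $G/U$ by a completely reducible linear group of the same degree (the kernel of the action on the sum of the composition factors of the natural module is a normal unipotent subgroup, hence lies in $U$), and then a genuine descent argument using that the enveloping algebra is spanned by finitely many group elements whose traces are sums of roots of unity; ``Maschke/Clifford'' and ``Brauer--Nesbitt'' are named but not deployed. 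Two smaller points: the existence of a \emph{largest} normal unipotent subgroup (that a product of normal unipotent subgroups is unipotent) is asserted without proof, and in characteristic $0$ your choice $U=1$ commits you to proving that the whole of $G$ is countable, which again rests on the same unproved descent step. So the proposal is a correct road map of the standard proof rather than a proof.
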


\bigskip

Our proof starts with the following observation.

\begin{lemma}\label{L:1}
Let $K$ be  a commutative  ring with unity. If  $G$ is  a  f.g. subgroup of $FL_{\nu}(K)$,
then  $G$ contains a normal abelian  subgroup $L$ such that  $G/L$ is isomorphic to a subgroup of the group of automorphisms of a  f.g. module over  a Noetherian commutative  subring of $K$.
\end{lemma}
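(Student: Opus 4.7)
The strategy is to exploit finite generation of $G$ together with the finitary condition to collapse the problem from $FL_{\nu}(K)$ to a finite-dimensional linear group over a finitely generated (hence Noetherian) commutative subring of $K$. The normal abelian subgroup $L$ promised in the conclusion can then simply be taken to be trivial.

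First I would fix a finite generating set $g_1, \dots, g_k$ of $G$. Each $g_i$ is finitary, so $\mathrm{supp}(g_i)$ is a finite subset of $\nu$; set $\nu_0 = \bigcup_{i=1}^{k}\mathrm{supp}(g_i)$, a finite set of some cardinality $n$. The identities $\mathrm{supp}(gh)\subseteq \mathrm{supp}(g)\cup \mathrm{supp}(h)$ and $\mathrm{supp}(g^{-1}) = \mathrm{supp}(g)$ recorded in the introduction imply that \emph{every} element of $G$ has support inside $\nu_0$. Restricting each $g\in G$ to the block $\nu_0 \times \nu_0$ therefore yields an injective group homomorphism $\rho\colon G \hookrightarrow GL_{n}(K)$.

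Next I would let $K_0$ be the subring of $K$ generated (over the prime subring of $K$) by all entries of the matrices $\rho(g_1)^{\pm 1}, \dots, \rho(g_k)^{\pm 1}$. Then $K_0$ is a finitely generated commutative ring, so by Hilbert's basis theorem (realising it as a quotient of some $\mathbb{Z}[x_1,\dots,x_N]$) it is Noetherian. Since $\rho(g_i)$ and $\rho(g_i)^{-1}$ both have entries in $K_0$, each $\rho(g_i)$ actually lies in $GL_{n}(K_0) = Aut_{K_0}(K_0^{n})$, so $\rho(G)\leq Aut_{K_0}(K_0^{n})$. The free module $K_0^{n}$ is a f.g.\ $K_0$-module, so taking $L = \{1\}$ (trivially normal and abelian in $G$) gives $G/L \cong \rho(G) \leq Aut_{K_0}(K_0^{n})$, which is exactly the desired conclusion.

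\textbf{Main obstacle.} There is no serious obstacle here; the lemma is a preparatory reduction. The only point requiring a bit of care is to include the entries of the \emph{inverses} $\rho(g_i)^{-1}$ (equivalently, the elements $\det \rho(g_i)^{-1}$) among the generators of $K_0$, so that every element of $G$ really sits in $GL_n(K_0)$ rather than only in the matrix monoid $M_n(K_0)$. Once this is done, Hilbert's basis theorem supplies the Noetherianity of $K_0$ for free, and the finitary condition is exactly what makes the reduction to a finite block $\nu_0$ possible.
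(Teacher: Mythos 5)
Your proof is correct, and its first two thirds coincide with the paper's: restrict to the finite block $\nu_0=\bigcup_i \mathrm{supp}(g_i)$ to embed $G$ in $GL_{n}(K)$, and adjoin the entries of the generators \emph{and of their inverses} to get a finitely generated, hence Noetherian, commutative subring (your remark that this ring is a \emph{quotient} of $\mathbb{Z}[x_1,\dots,x_N]$ is in fact slightly more careful than the paper's claim that $\mathbb{Z}[a_1]\cong\mathbb{Z}[x]$). Where you diverge is the endgame: you observe that the restriction $\rho\colon G\to GL_n(K_0)=Aut_{K_0}(K_0^n)$ is already injective, so you may take $L=\{1\}$ and the free module $K_0^n$. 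The paper instead works inside the module $\mathfrak{A}$, sets $B=K_1^{\nu'}$, considers the fixed-point submodule $C=C_B(G)$, and when $C\neq 0$ takes $L$ to be the stabilizer $C_G(C)\cap C_G(B/C)$ of the series $0\le C\le B$, which is abelian by Kaluzhnin's theorem, obtaining $G/L$ inside $Aut_{K_1}(B/C)$; this produces a possibly nontrivial abelian $L$ and a module on which $G$ has no nonzero fixed points. For the lemma as stated (and for its later uses via Fact~\ref{F:1}, which applies to any f.g. module over a Noetherian commutative ring) your shortcut is perfectly adequate, since a trivial $L$ is normal and abelian and nothing downstream requires $L\neq 1$ or the passage to $B/C$; what the paper's extra step buys is only the fixed-point-free quotient module, a feature it does not subsequently exploit. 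Your flagged point of care -- including the entries of the inverses so that $\rho(G)$ lands in $GL_n(K_0)$ rather than merely in the matrix monoid -- is exactly the right one, and is also what the paper does.
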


\begin{proof}
Let $G = \langle g_{1},  \ldots, g_{n} \mid g_{i}\in FL_{\nu}(K)\rangle$.  Set $\nu' = supp( g_{1}) \cup \cdots \cup supp(g_{n})$. Clearly $\nu'$ is a finite set,  $supp (g) \subseteq \nu'$ for each $g \in G$ and $G$ is  isomorphic to a subgroup of $GL_{\nu'}(K)$.
Let $\{a_{1}, a_{2}, \ldots, a_{t}\}$ be the set of all non-zero entries  of the matrices $g_{1},  \ldots, g_{n},  g_{1}^{-1},  \ldots, g_{n}^{-1}\in G$. Let us prove that the commutative ring $K_{1} = \langle a_{1}, a_{2}, \ldots, a_{t} \rangle$ is  Noetherian. Since the subring   $\langle a_{1} \rangle=\mathbb{Z}[a_{1}]\cong \mathbb{Z}[x]$, $\langle a_{1} \rangle$ is   a  commutative Noetherian ring by the Hilbert's theorem.
Using  induction on $k\geq 1$ we have that  $\langle a_{1},\ldots,  a_{k} \rangle=\langle a_{1},\ldots,  a_{k-1} \rangle[a_k]$  is a commutative Noetherian ring by the same argument, so $K_{1}$ is also a commutative Noetherian ring.

Define the right $K_{1}G$-submodule  $B= \bigoplus_{i =1}^{\nu'}A_{i}$ of $\mathfrak{A}$ (see the definition before (\ref{E:1}))  in which  each $A_{i}$ is   isomorphic to the additive group of  $K_{1}$. Since the set $\nu'$ is finite,  $B$ can be considered as a f.g. right $K_{1}G$-module.

Let $C=C_{B}(G)$.  Obviously  $G = C_{G}(C)$ but    $G \not = C_{G}(B)$ by (\ref{E:1}), so
\begin{equation}\label{E:2}
\langle 0 \rangle \leq C \lvertneqq B.
\end{equation}
If $C=\gp{0}$ then  $G/C_{G}(B)$ is a subgroup of the group of automorphisms of the f.g. module $B$ over  $K_{1}$ and  we put  $L=1$. Now let $C\not=\gp{0}$. Set  $L = C_{G}(C) \cap C_{G}(B/C)$. Every  element of  $L$ acts trivially in each  factor of (\ref{E:2}), so $L$ is abelian   (see  \cite[Kaluzhnin's  Theorem,  p.144]{Kargapolov_Merzlyakov}) and  (see \cite[Theorem 4.3.9]{Kargapolov_Merzlyakov})
\[
G/L \leq G/C_{G}(C) \times G/C_{G}(B/C),
\]
so     $G/L$ is isomorphic to a subgroup of $G/C_{G}(B/C)$.

Since  $B$  and $B/C$  are   f.g.  $K_{1}$-modules,   $G/L$ is isomorphic to a  subgroup of the group of automorphisms of the f.g. module $B/C$ over  $K_{1}$. \end{proof}

\begin{proof}[{Proof of Theorem \ref{T:1}}]
(i) Let  $H = \langle g_{1},  \ldots, g_{n}\mid g_i\in G \rangle$ and  $\nu' = supp( g_{1})  \cup \cdots \cup supp(g_{n})$. The group  $H$ is isomorphic to a subgroup of $GL_{\nu'}(K)$.
Since each  integral domain can be embedded in a field,  $H$ is isomorphic to a subgroup of  $GL_{ \nu'} (F)$ for some field $F$.  Torsion  subgroups of $GL_{ \nu'} (F)$   are  locally finite, so   $H$ is finite (see  Fact \ref{F:2}) and  $G$ is a  locally finite group.

Assume   $\nu$ is a countable set and let  $G_{i}$ be   the largest subgroup of $G$, such that  $supp(g) \subseteq \{1, \ldots, i\}$ for any $g \in G_{i}$. Then $G = \cup_{i \in {\mathbb N}} G_{i}$, in which $G_{1} \leq G_{2} \leq \cdots $.
Each $G_{i}$ contains a normal nilpotent subgroup  $N_{i}$ such that  $G_{i}/N_{i}$  is  countable (see  Fact \ref{F:3}).

(ii) Obviously   $N_{2}$ is a normal subgroup of   $G_{2}$, so  $N_{1}N_{2}$ is a  subgroup of  $G_{2}$.  Using induction it is easy to see that   $N_{1}N_{2} \cdots N_{i}\leq G_{i}$   for $2\leq i$. Consequently  we have  an increasing series  of groups $N_{1} \leq N_{1}N_{2} \leq \cdots \leq N_{1}N_{2}\cdots N_{i} \leq \cdots$ , so $\langle N_{1}, N_{2}, \ldots \rangle = \prod_{i=1}^{\infty} N_{i}$.

(iii) Each   $G_{i}/N_{i}$ is countable, so     its subgroup $N_{1}N_{2} \cdots N_{i}/N_{i}$ is countable too.
\end{proof}

\begin{proof}[{Proof of Theorem \ref{T:2}}]
  (i) Let  $H = \langle g_{1},  \ldots, g_{n} \mid g_i\in G \rangle$ and    $\nu' = supp( g_{1}) \cup supp(g_{2}) \cup \cdots \cup supp(g_{n})$.  Moreover  $\nu'$ is a finite set and  $H$ is  isomorphic to a  subgroup of $GL_{\nu'}(K)$. We need to  consider only the case when $H \not = C_{H}(A)$ by  (\ref{E:1}).

The group $H$ contains a normal abelian subgroup $L$ such that  $H/L$ is isomorphic to a subgroup of  a group of automorphisms of a f.g. module over   the Noetherian commutative  ring $K_{1}$  (Lemma \ref{L:1}). The quotient group    $H/L$ is an extension of a nilpotent group by  a quasi-linear group (Fact \ref{F:1}).  As  torsion linear groups are  locally finite,  the  torsion quasi-linear groups are necessarily locally finite (see  Fact \ref{F:2}). Consequently
$H/L$ is finite (see  \cite[Schmidt's  theorem   \S~53]{Kurosh} and    \cite{Schmidt}). Consequently  $H$ is  finite and  $G$ is a  locally finite group.

Let   $\nu$ be  a countable set. As in Theorem \ref{T:1} we prove   that  $G = \cup_{i \in {\mathbb N}} G_{i}$, where $G_{1} \leq G_2  \leq \cdots $. Let $K$  be a Noeterian commutative ring. Then   each   $G_{i}$ has a normal abelian subgroup  $L_{i}$  such that  $ G_{i}/L_{i}$ is an extension of a nilpotent group $M_{i}/L_{i}$ by the  quasi-linear group $(G_{i}/L_{i})/(M_{i}/L_{i})$  (see Lemma \ref{L:1} and   Fact \ref{F:1}).  In view of the isomorphism $ G_{i}/M_{i} \simeq (G_{i}/L_{i})/(M_{i}/L_{i})$ we get that  $G_{i}/M_{i}$ is  quasi-linear. Since a  torsion linear group is a  countable extension of a nilpotent group (see  Fact \ref{F:3}), we obtain  that   a torsion quasi-linear group is a countable extension of a nilpotent group. Therefore, $G_{i}/M_{i}$ contains a normal nilpotent subgroup  $N_{i}/M_{i}$ such that  $G_{i}/N_{i}$ is countable.

(ii)-(iii) The proof is  the same  as of Theorem \ref{T:1}.
\end{proof}

\begin{proof}[{Proof of Theorem \ref{T:3}}]
(i) Since each   integral domain   can be  embedded in a field, the group  $G$ is isomorphic to a f.g. subgroup of $GL_{n}(F)$ for some  field $F$.  Hence   $G$ has a   subgroup of finite index such that its  derived subgroup $N$  is nilpotent   (see \cite[Theorem 3.6]{Wehrfritz}).  Moreover $G/N$  is  polycyclic because   $G$ is a f.g. group.

(ii) If $K$ is a   commutative ring then  $G$ contains a normal abelian  subgroup $L$ such that  $G/L$ is isomorphic to a subgroup of the group of automorphisms of a  f.g. module over  the Noetherian commutative  ring $K_{1}$   (see Lemma \ref{L:1}) and        $G/L$ is an extension of a nilpotent group $S/L$ by the  quasi-linear group $(G/L)/(S/L)$ (see  Fact \ref{F:1}). The isomorphism $(G/L)/(S/L) \simeq G/S$ and the structure of a f.g. solvable quasi-linear group      gives  that $G/S$ has a normal nilpotent subgroup of $N/S$ such that  $(G/S)/(N/S)$ is  polycyclic.
It follows that  $G/N$ is  polycyclic, too.  Since    $N/S$ and $S/L$ are  nilpotent,   $N/L$ is nilpotent-by-nilpotent. Consequently  $G$ contains a series of normal subgroups $L \leq N \leq G$  where $L$ is an  abelian subgroup, $N/L$ is  nilpotent-by-nilpotent and $G/N$ is a  polycyclic group.
\end{proof}

\begin{proof}[{Proof of Corollary \ref{C:1}}]
(i) Each integral domain    can be  embedded in a field. Therefore  $G$ is isomorphic to a  f.g. subgroup of $GL_{n}(F)$ for some field $F$.  Moreover either $G$  has a normal  solvable subgroup $H$ such that $G/H$ is finite  or $G$  contains a non-cyclic free subgroup (see   \cite[Theorem 10.16]{Wehrfritz}).  If $G$  has a normal  solvable subgroup $H$  of finite index then   $H$ has  a  normal nilpotent subgroup $N$  such that   $H/N$ is polycyclic (see  Theorem \ref{T:3}). As $N$ is a characteristic subgroup of $H$ by construction, $N$ is a normal subgroup of $G$. Therefore $G$ has a normal nilpotent  subgroup $N$ such that $G/N$ is polycyclic-by-finite.

(ii) If $K$ is a  commutative ring  then   $G$ has a series of normal subgroups $L \leq U \leq G$ such that $L$ is an abelian subgroup, $U/L$ is nilpotent and $G/U$ is  quasi-linear (see Lemma \ref{L:1} and   Fact \ref{F:1}).  This yields that     either $G/U$  has a normal  solvable subgroup $H/U$  of finite index  or   $G/U$   contains a non-cyclic free subgroup (see the definition of  a quasi-linear group and    \cite[Theorem 10.16]{Wehrfritz}).

Let  $G/U$  has a normal  solvable subgroup $H/U$  of finite index. The group  $H/U$   has a   normal subgroup of finite index such that its  derived subgroup $N/U$  is nilpotent (see \cite[Theorem 3.6]{Wehrfritz}).
Since  $G$ is a f.g. group  and $G/H$ is finite then   $(G/U)/(N/U)$  is  polycyclic-by-finite. It follows that $G$  has a series of normal subgroups
\[
L  \leq U \leq N \leq G
\]
in which  $L$ is  abelian, $U/L$ and  $N/U$   are nilpotent  and $G/N$ is a  polycyclic-by-finite group.

(iii) The group $G$ is either solvable-by-finite or contains a non-cyclic free subgroup (see Corollary \ref{C:1}  (ii)). In the first case the group  $G$ is   polycyclic-by-finite (see
\cite[Theorem 24.1.7] {Kargapolov_Merzlyakov}).
\end{proof}

\begin{proof}[{Proof of Theorem \ref{T:4}}]
If $H$ is  a  f.g. subgroup of $G$, then   either  $H$ is solvable-by-finite group  or $H$ contains a non-cyclic free subgroup (see Corollary \ref{C:1}(i)-(ii)). In the second case $H$ does not  satisfy the minimal condition on subgroups, a  contradiction. Hence $H$ is solvable-by-finite, so   $H$ is a f.g.  Chernikov   group (see \cite[Theorem 24.1.4]{Kargapolov_Merzlyakov}). Consequently  $H$ is  finite.
\end{proof}

The authors are grateful for Professor Denis Osin   for his  valuable remarks.

\end{document}